\author{Matthew D. Kvalheim}
\address{Department of Mathematics and Statistics, University of Maryland, Baltimore County, MD, USA} 
\email{kvalheim@umbc.edu}
\author{Eduardo D. Sontag}
\address{Departments of Electrical and Computer Engineering and Bioengineering, and affiliate of Departments of Mathematics and
	Chemical Engineering, Northeastern University, Boston, MA, USA}
\email{sontag@sontaglab.org, e.sontag@northestern.edu, eduardo.sontag@gmail.com}
\newcommand{\N}{\mathbb{N}}
\newcommand{\R}{\mathbb{R}}
\newcommand{\Sph}{\mathbb{S}}
\theoremstyle{definition}
\newtheorem{Th}{Theorem}
\newtheorem{Prop}{Proposition}
\newtheorem*{Quest-non}{Question}
\newtheorem{Rem}{Remark}
\title{Global linearization of asymptotically stable systems without hyperbolicity}
\begin{document}

\begin{abstract}
	We give a proof of an extension of the Hartman-Grobman theorem to nonhyperbolic but asymptotically stable equilibria of vector fields.
	Moreover, the linearizing topological conjugacy is (i) defined on the entire basin of attraction if the vector field is complete, and (ii) a $C^{k\geq 1}$-diffeomorphism on the complement of the equilibrium if the vector field is $C^k$ and the underlying space is not $5$-dimensional.
	We also show that  the $C^k$ statement in the $5$-dimensional case is equivalent to the $4$-dimensional smooth Poincar\'{e} conjecture.
\end{abstract}

\maketitle

\section{Introduction}\label{sec:intro}

Consider a nonlinear system of ordinary differential equations
\begin{equation}\label{eq:ode}
	\dot{x}(t)=f(x(t)),	
\end{equation}
where $f$ is a vector field on an $n$-dimensional manifold $M$.
If $f\in C^1$ and $x_*\in M$ is a hyperbolic equilibrium for $f$, the Hartman-Grobman theorem guarantees existence of continuous local coordinates on a neighborhood of $x_*$ in which the \emph{nonlinear} dynamics \eqref{eq:ode} become \emph{linear} \cite{hartman1960lemma,grobman1959homeomorphism}.

In this paper, we provide a proof of an extension of the Hartman-Grobman theorem to nonhyperbolic but asymptotically stable equilibria.
We also assume only that $f$ is continuous and uniquely integrable (e.g. locally Lipschitz).
But if additionally $f\in C^k$ and $n\neq 5$, we construct continuous linearizing coordinates that are $C^k$ away from $x_*$.
Finally, if $f$ is complete, we prove the existence of globally linearizing coordinates on the entire basin of attraction of $x_*$.
In fact, the linear dynamics can be taken to be $\dot{y}=-y$, in which case the coordinates transform the nonlinear dynamics into the negative gradient flow of the convex function $y\mapsto \|y\|^2/2$.

To our knowledge, the first trace of these extensions appeared in work of Coleman, who proved a theorem equivalent to our local result for $k=0$ assuming $x_*$ has a Lyapunov function with level sets homeomorphic to spheres \cite{coleman1965local,coleman1966addendum} (see \cite[p.~247]{wilson1978reformulation}).
Our proof of the global result fills in details of one previously sketched by Gr\"une, the second author, and Wirth for $k=0$ and $n\neq 4$, and for general $k$ and $n\neq 4, 5$ \cite[remark in p.~133]{grune1999asymptotic}.
Using the same techniques and Perelman's solution to the $3$-dimensional Poincar\'{e} conjecture \cite{perelman2002entropy,perelman2003finite,perelman2003ricci}, we extend this global result to all $n$ for $k=0$, and to $n\neq 5$ for general $k$.
In fact, we prove that whether the $C^{k\geq 1}$ linearization results hold for $n=5$ is \emph{equivalent} to the $4$-dimensional smooth Poincar\'{e} conjecture (still open).
Jongeneel recently proved other interesting stability results using Perelman's solution \cite{jongeneel2024asymptotic}.
We also note connections to classical dynamical systems results showing that systems with globally asymptotically stable equilibria admit $C^0$ Lyapunov functions $V$ with exponential decrease $V(x(t)) = e^{-t}V(x(0))$ 
\cite[Chapter V.2]{BhatSzeg70}.
Furthermore, in the special case of linear systems, it has long been known that each linear system is equivalent, under a continuous coordinate change, to the system $\dot y = -y$ of the same dimension \cite{Arnold92}.

The local linearization result is Theorem~\ref{th:local-lin}, and the global result is Theorem~\ref{th:global-lin}.
While some techniques  in the hyperbolic case establish global linearizations by extending local ones \cite{lan2013linearization, eldering2018global,kvalheim2021existence}, in this paper we prove the global result directly (and under no hyperbolicity assumptions) and the local result as a consequence.
In contrast to the classical hyperbolic results, we assume stability, since otherwise there are no natural global Lyapunov functions to guide the constructions.

It seems worth noting that Theorems~\ref{th:local-lin}, \ref{th:global-lin} give new existence results for targets of algorithms like extended Dynamic Mode Decomposition \cite{williams2015data} studied by applied Koopman operator theorists \cite{budisic2012appliedkoopmanism,mezic2020spectrum,brunton2022modern}.
Such algorithms seek to compute $N\geq \dim M$ linearizing ``observables'' through which nonlinear systems appear linear. Theorems~\ref{th:local-lin}, \ref{th:global-lin} (see also Remark~\ref{rem:eigenfunctions-independent}) give existence results in the case $N=\dim M$, a case of arguably practical importance \cite{haller2024data} complementing recent (non)existence results for the case $N>\dim M$ \cite{belabbas2023super,liu2023non,kvalheim2024linearizability,arathoon2023koopman,liu2024properties,ko2024minimum}.

\section{Results}\label{sec:results}

We begin with the local linearization result, whose statement refers to the initial value problem
\begin{equation}\label{eq:ivp}
	\dot{x}(t)=f(x(t)),\quad x(0)=x_0.	
\end{equation}

\begin{Th}\label{th:local-lin}
	Let $x_*$ be an asymptotically stable equilibrium for a uniquely integrable continuous vector field $f$ on an $n$-dimensional $C^\infty$ manifold $M$.
	There is an open neighborhood $U\subset M$ of $x_*$ such that, for any Hurwitz matrix $A\in \R^{n\times n}$, there is a topological embedding $h\colon U\to\R^n$ such that, for all $x_0\in U$, the maximal solution of \eqref{eq:ivp} satisfies $x(t)\in U$ and
	\begin{equation}\label{eq:th:local-lin}
		\textnormal{$x(t)=h^{-1}(e^{At}h(x_0))$ \quad for all $t\geq 0$.}	
	\end{equation}
	Moreover, if $n\neq 5$ and $f\in C^k$ with $k\in \N_{\geq 1}\cup \{\infty\}$,  then there exists such an $h$ that additionally restricts to a $C^k$ embedding $U\setminus \{x_*\}\to \R^n\setminus \{0\}$.
\end{Th}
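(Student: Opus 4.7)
The plan is to build $h$ via a flow-box construction tied to a Lyapunov function. First I would invoke a converse Lyapunov theorem (Wilson--Massera--Kurzweil) to obtain a function $V$ on a forward-invariant neighborhood of $x_*$, vanishing at $x_*$, positive elsewhere, $C^k$ off $x_*$ whenever $f\in C^k$, and normalized so that $V(\phi_t(x))=e^{-t}V(x)$ along the flow $\phi_t$ of $f$. Set $U=\{V<1\}\cup\{x_*\}$; this is open, forward-invariant, with compact closure, and its boundary $\Sigma:=V^{-1}(1)$ is a compact $C^k$ hypersurface transverse to $f$. The flow then yields a $C^k$ diffeomorphism $U\setminus\{x_*\}\cong \Sigma\times(-\infty,0]$ sending $x$ to $(\phi_{\log V(x)}(x),\log V(x))$ (here $\log V(x)\le 0$, so the first coordinate is the backward-flow landing on $\Sigma$), mirroring the identification $\bar B_1(0)\setminus\{0\}\cong S^{n-1}\times(-\infty,0]$ for the linear flow $\dot y=-y$.

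Second, given any homeomorphism (respectively, $C^k$ diffeomorphism) $g\colon \Sigma\to S^{n-1}$, I would define $h(x_*)=0$ and $h(x)=V(x)\,g(\phi_{\log V(x)}(x))$ for $x\ne x_*$. A direct verification shows that $h$ is a topological embedding $U\to \R^n$ onto the open unit ball, continuous at $x_*$ (using $V\to 0$ and uniform convergence $\phi_t(p)\to x_*$ on the compact $\Sigma$), that it conjugates the flow of $f$ to $\dot y=-y$, and that it is a $C^k$ embedding on $U\setminus\{x_*\}$ whenever $g$ is. To pass from $-I$ to an arbitrary Hurwitz $A$, post-compose with a topological conjugacy between $e^{-tI}$ and $e^{tA}$ on $\R^n$; such a conjugacy is standard (e.g., via radial rescaling along orbits using a quadratic Lyapunov function for $A$) and is $C^\infty$ off the origin, so the $C^k$ regularity on $U\setminus \{x_*\}$ is preserved.

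The main obstacle lies in producing $g$. By Wilson's structure theorem for level surfaces of Lyapunov functions, $\Sigma$ is a compact manifold homotopy equivalent to $S^{n-1}$; hence a topological $g$ exists in every dimension by the generalized Poincar\'e conjecture---classical for $n-1\le 2$, Perelman for $n-1=3$, Freedman for $n-1=4$, and Smale for $n-1\ge 5$---yielding the $C^0$ assertion for all $n$. For the $C^k$ conclusion, $\Sigma$ is a $C^k$ homotopy $(n-1)$-sphere and $g$ must be a $C^k$ diffeomorphism. The cases $n\le 4$ are handled by Perelman together with classical low-dimensional smooth Poincar\'e results; for $n\ge 6$, one combines uniqueness of smooth structures on $\R^n$ for $n\ne 4$ (Kirby--Siebenmann smoothing theory) with a careful choice of $V$ whose cross-section $\Sigma$ can be arranged to be smoothly a standard $(n-1)$-sphere inside a smooth coordinate chart around $x_*$. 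This leaves precisely the case $n=5$, where $\Sigma$ is a smooth homotopy $4$-sphere and the $C^k$ assertion is exactly the smooth $4$-dimensional Poincar\'e conjecture.
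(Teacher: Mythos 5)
Your overall construction is essentially the paper's: obtain a Lyapunov function, take a level-set cross-section $\Sigma$ transverse to the flow, map $\Sigma$ to $S^{n-1}$, and use the flow coordinates to extend this to a conjugacy with $\dot y=-y$, then post-compose with a linear-to-linear conjugacy to handle general Hurwitz $A$. (The paper reverses the order, proving the global theorem first and then deducing the local one by cutting off $f$ with a bump function; that is a cosmetic difference.) However, there is a genuine gap in how you produce the $C^k$ diffeomorphism $g\colon \Sigma\to S^{n-1}$ in high dimensions.

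For $n\ge 6$ you write that one can combine ``uniqueness of smooth structures on $\R^n$ for $n\ne 4$'' with ``a careful choice of $V$ whose cross-section $\Sigma$ can be arranged to be smoothly a standard $(n-1)$-sphere inside a smooth coordinate chart around $x_*$.'' This does not work as stated. Arranging $\Sigma$ to be a standard sphere is exactly the assertion that needs proof: a small round sphere in a coordinate chart around $x_*$ is not in general transverse to $f$, and the transverse cross-sections one actually gets are level sets of a Lyapunov function, which are a priori only homotopy $(n-1)$-spheres. Knowing that $B$ (or $U$) is diffeomorphic to $\R^n$ (which is due to Stallings for $n\ge 5$, not Kirby--Siebenmann) does not by itself tell you that a smoothly embedded compact contractible region in $\R^n$ has standard-sphere boundary; that is still a nontrivial theorem. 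The ingredient actually needed, and the one the paper cites, is Smale's structure theorem (via the $h$-cobordism theorem): a compact contractible smooth $n$-manifold with simply connected boundary is diffeomorphic to $D^n$ when $n\ge 6$, so the boundary level set $\Sigma=\partial V^{-1}([0,c])$ is diffeomorphic to $S^{n-1}$. With that substitution, and Perelman/Moise in dimensions $n\le 4$ and Freedman's topological result when $n=5$, the rest of your argument goes through and matches the paper.
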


An advantage of Theorem~\ref{th:local-lin} is that it does not assume completeness of $f$.
On the other hand, the global linearization result below is formulated more optimally for smoothness, since the vector field generating a $C^k$ flow need only be $C^{k-1}$ in general.

\begin{Th}\label{th:global-lin}
	Let $x_*$ be an asymptotically stable equilibrium with basin of attraction $B$ for the flow $\Phi$ of a complete uniquely integrable continuous vector field on an $n$-dimensional $C^\infty$  manifold $M$.
	For any Hurwitz matrix $A\in \R^{n\times n}$ there is a homeomorphism $h\colon B\to \R^n$ satisfying
	\begin{equation}\label{eq:th:global-lin}
		\textnormal{$\Phi^t|_B = h^{-1} \circ e^{At}\circ h$ \quad for all \quad $t\in \R$.}
	\end{equation}
	Moreover, if $n\neq 5$ and $\Phi\in C^k$ with $k\in \N_{\geq 1}\cup \{\infty\}$,  then there exists such an $h$ that additionally restricts to a $C^k$-diffeomorphism $B\setminus \{x_*\}\to \R^n\setminus \{0\}$.
\end{Th}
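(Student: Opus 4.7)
The plan is to construct $h$ via a flow-coordinate recipe built on a Lyapunov function. First I would reduce to the case $A=-I$: the classical result cited in the introduction provides a topological conjugacy from $\dot y=Ay$ to $\dot y=-y$, and this conjugacy can be arranged to be a $C^\infty$ diffeomorphism away from the origin, so composing with such a map preserves every conclusion of the theorem. Next I would invoke a Massera--Kurzweil converse Lyapunov theorem to obtain a proper continuous function $V\colon B\to[0,\infty)$ with $V(x_*)=0$, $V>0$ elsewhere, and the exponential identity $V(\Phi^t(x))=e^{-t}V(x)$ for all $t\in\R$. When $\Phi\in C^k$ with $k\ge 1$, a standard smoothing of this construction produces such a $V$ that is $C^k$ on $B\setminus\{x_*\}$, and the identity then forces every $c>0$ to be a regular value of $V$.

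Pick $c=1$ and set $L=V^{-1}(1)$. Since $V$ is proper, $K=V^{-1}([0,1])$ is compact; the flow retracts $K$ to $x_*$, so $K$ is contractible with boundary $L$. Flow coordinates identify $B\setminus\{x_*\}$ with $L\times\R$ via $x\mapsto(\Phi^{\tau(x)}(x),\tau(x))$, where $\tau(x)=\ln V(x)$ is the signed time required to flow $x$ onto $L$. Given any homeomorphism (respectively $C^k$ diffeomorphism) $\psi\colon L\to S^{n-1}\subset\R^n$, define
\[
h(x)=\begin{cases} 0 & x=x_*,\\ V(x)\,\psi\bigl(\Phi^{\tau(x)}(x)\bigr) & x\ne x_*.\end{cases}
\]
A direct computation yields $h\circ\Phi^t=e^{-t}h$, while properness of $V$ makes $h$ a proper bijection onto $\R^n$, hence a homeomorphism (respectively a $C^k$ diffeomorphism off $x_*$) with the required conjugacy property.

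Everything thus reduces to producing $\psi$. Because $L=\partial K$ with $K$ compact and contractible, $L$ is a homotopy $(n-1)$-sphere. For the topological statement I would invoke the full generalized Poincar\'e conjecture---Perelman in dimension $3$, Freedman in dimension $4$, Smale in dimensions $\ge 5$, together with the classical low-dimensional results---to obtain a homeomorphism $\psi\colon L\to S^{n-1}$. The main obstacle is the $C^k$ case, where exotic sphere phenomena must be confronted. My plan: in dimensions $n\le 4$ use uniqueness of smooth structure on $S^1,S^2,S^3$ (Perelman being needed for $n=4$); in dimensions $n\ge 6$ observe that removing the interior of a smoothly embedded closed $n$-disk from $K$ produces a simply connected smooth $h$-cobordism between $L$ and $S^{n-1}$, which by the smooth $h$-cobordism theorem (valid in cobordism dimension $\ge 6$) is a product, so $L$ is diffeomorphic to $S^{n-1}$. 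The excluded case $n=5$ is precisely where this argument would need the $h$-cobordism theorem in dimension $5$---which fails---and where the existence of $\psi$ as a diffeomorphism becomes equivalent to the $4$-dimensional smooth Poincar\'e conjecture.
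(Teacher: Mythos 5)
Your proposal follows the same architecture as the paper's proof: a converse Lyapunov theorem, flow coordinates identifying $B\setminus\{x_*\}$ with $L\times\R$ for a level set $L$ of $V$, a map $h$ of the form $e^{\tau(x)}\cdot(\text{sphere coordinate})$, and the classification of homotopy $(n-1)$-spheres (Perelman, Freedman, Smale, or $h$-cobordism). The construction of $h$ and the verification of the conjugacy identity are, up to notational choices, the paper's.

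There is, however, a genuine gap in the topological step. You write that ``because $L=\partial K$ with $K$ compact and contractible, $L$ is a homotopy $(n-1)$-sphere.'' That implication is false in general: Mazur's compact contractible smooth $4$-manifolds have boundary a Brieskorn homology $3$-sphere with nontrivial fundamental group, hence not a homotopy sphere. Lefschetz duality with the contractibility of $K$ does give that $L$ has the \emph{homology} of $S^{n-1}$, but simple connectedness of $L$—without which none of Smale's theorem, the smooth $h$-cobordism theorem, or Freedman's theorem applies—needs an extra input. The needed input is exactly the flow structure you set up but did not use at this point: $B$ is an open contractible $n$-manifold, deleting a point from a simply connected $n$-manifold with $n\geq 3$ leaves it simply connected, and $B\setminus\{x_*\}$ deformation retracts onto $L$ via the flow; hence $\pi_1(L)=1$ for $n\geq 3$. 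The paper sidesteps the issue by citing Wilson's structure theorem \cite[pp.~326--327]{wilson1967structure}, which proves precisely that $L$ is homotopy equivalent to $S^{n-1}$.

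Two smaller remarks. First, your reduction to $A=-I$ appeals to Arnold's classical classification of linear flows and then asserts that the conjugacy ``can be arranged to be a $C^\infty$ diffeomorphism away from the origin.'' Arnold's result is only topological; the smooth-off-origin statement is itself a nontrivial claim and requires its own argument. The paper handles this more economically by applying the $A=-I$ case of the theorem being proved to the linear flow $\dot y=Ay$ on $\R^n$ (with basin all of $\R^n$), obtaining $\tilde h$, and composing $\tilde h^{-1}\circ h$. Second, you invoke a Massera--Kurzweil-type $V$ with the exact identity $V\circ\Phi^t=e^{-t}V$ and then ``smooth'' it; an arbitrary smoothing will destroy that identity. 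It is cleaner to take a Wilson-type $C^\infty$ proper Lyapunov function $V_0$ (as the paper does) and \emph{derive} $\tau$ as the implicit hitting time of the level set $V_0^{-1}(c)$ via the inverse of $(t,x)\mapsto\Phi^t(x)$; if $\Phi\in C^k$, this is automatically $C^k$ off $x_*$ by the implicit function theorem.
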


\begin{Rem}\label{rem:gradient}
	In particular, consider $A=-I_{n\times n}$ in the above theorems.
	Then as noted in \S \ref{sec:intro}, $h$ transforms the nonlinear dynamics into the negative gradient flow of the dynamics $\dot{y}=-y$ of the convex function $y\mapsto \|y\|^2/2$.
	This is distinct from the fact that a Riemannian metric always exists making the vector field a gradient on the complement of the equilibrium within the basin of attraction \cite[Thm~1]{barta2012lyapunov} in all dimensions (but for $n\neq 5$ this directly follows from the above theorems).
	The same explanation shows that $h$ transforms the nonlinear dynamics into a ``contractive system'' \cite{sontag10yamamoto}.
	There are generally no explicit expressions for the transformation $h$, except in very simple cases. For example, the one-dimensional system $\dot x = -x^3$ is transformed into $\dot y = -y$ by means of the transformation $y =  h(x) = e^{-\frac{1}{2x^2}}$.
\end{Rem}

\begin{Rem}\label{rem:eigenfunctions-independent}
	By choosing $A$ to be diagonal, the conclusion of Theorem~\ref{th:global-lin} furnishes $n$-tuples $(\psi_1,\ldots,\psi_n)$ of continuous real eigenfunctions of the Koopman operator such that the mapping $(\psi_1,\ldots,\psi_n)\colon B\to \R^n$ is a homeomorphism.
	Indeed, we may consider the functions $\psi_i = h_i$, where $h$ is the homeomorphism from our theorem statement with components $h_1,\ldots, h_n$.
\end{Rem}

\begin{Rem}\label{rem:hyperbolic}
	If the equilibrium $x_*$ is hyperbolic, then the dimensional restriction $n\neq 5$ in Theorems~\ref{th:local-lin}, \ref{th:global-lin} is not needed since then there exists a smooth Lyapunov function having nondegenerate quadratic (Morse) singularity at $x_*$.
	The nonzero level sets of such a Lyapunov function are diffeomorphic to the standard sphere of the appropriate dimension, so any such level set can be used instead of $L$ in the proof of Theorem~\ref{th:global-lin}.
\end{Rem}

\begin{Rem}\label{rem:smoothness_away_from_equil}
	The proofs of Theorems~\ref{th:local-lin} and~\ref{th:global-lin} require only that the vector field $f$ is $C^k$ on the complement of $\{x_*\}$ and the flow $\Phi$ is $C^k$ on $\R\times B\setminus \{x_*\}$.
\end{Rem}

The final result establishes the mentioned relationship to the $4$-dimensional smooth Poincar\'{e} conjecture.
This conjecture asserts that every $4$-dimensional $C^\infty$ manifold homotopy equivalent to the $4$-sphere is diffeomorphic to the $4$-sphere.

\begin{Prop}\label{prop:poincare}
	Fix $k\in \N_{\geq 1}\cup \{\infty\}$.
	The $4$-dimensional smooth Poincar\'{e} conjecture is true if and only if the $C^k$ statement of Theorem~\ref{th:local-lin} (or Theorem~\ref{th:global-lin}) is true for $n=5$.
\end{Prop}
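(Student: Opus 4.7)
The plan is to prove the equivalence by establishing each implication; we focus on Theorem~\ref{th:global-lin}, the argument for Theorem~\ref{th:local-lin} being analogous.

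\emph{Sufficiency.} Consider the proof of Theorem~\ref{th:global-lin}: after producing a smooth Lyapunov function $\lyap$ on $\bas$ whose regular level set $\level = \lyap^{-1}(c)$ serves as a cross-section of $\fl^t$, one obtains a $C^k$-diffeomorphism $\bas\setminus\{\eq_*\}\to\level\times\R$, and the $C^k$-linearization reduces to identifying $\level$ with $\sph^{n-1}$ as smooth manifolds. Since $\level$ bounds the compact contractible smooth $n$-manifold $\{\lyap\leq c\}$, it is a smooth homotopy $(n-1)$-sphere. For $n-1\in\{1,2,3\}$, Perelman and the classical Poincar\'e conjectures give $\level\cong\sph^{n-1}$; for $n-1\geq 5$, excising a small open smooth $n$-disk around $\eq_*$ from $\{\lyap\leq c\}$ yields a smooth simply-connected $h$-cobordism from $\sph^{n-1}$ to $\level$, which is trivial by Smale's $h$-cobordism theorem. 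The sole remaining case is $n-1=4$, where the required identification is exactly the 4D smooth Poincar\'e conjecture.

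\emph{Necessity.} Conversely, assume the $C^k$ statement of Theorem~\ref{th:global-lin} for $n=5$, and let $\Sigma$ be a smooth closed 4-manifold homeomorphic to $\sph^4$. Since $\Omega_4^{SO}=0$, $\Sigma$ bounds a smooth compact oriented 5-manifold $W_0$; standard interior surgery (2-handles killing $\pi_1$, then 3-handles on embedded 2-spheres available via the Whitney trick in dimension 5 killing $H_2$) yields a smooth compact contractible $W$ with $\partial W=\Sigma$. On $\st:=W^\circ$ construct a smooth complete vector field $\vfo$ with $\eq_*\in\st$ as the unique asymptotically stable equilibrium with basin $\st$, together with a smooth Lyapunov function $\lyap$ whose regular level set $\lyap^{-1}(c)$ is smoothly diffeomorphic to $\Sigma$ (take, e.g., a slight inward collar push of $\partial W$). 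The assumed $C^k$-linearization with $\mat=-I_{5\times5}$ produces a homeomorphism $\conj\colon\st\to\R^5$, a $C^k$-diffeomorphism off $\eq_*$, conjugating $\fl^t$ with $e^{-t}\mathrm{id}$. The image $\conj(\lyap^{-1}(c))\subset\R^5\setminus\{0\}$ is a smooth cross-section of $\dot y=-y$; in polar coordinates every such cross-section is the graph $\{\rho(\theta)\theta:\theta\in\sph^4\}$ of a smooth positive $\rho\colon\sph^4\to(0,\infty)$, hence diffeomorphic to the standard $\sph^4$. Since $\conj$ restricts to a $C^k$-diffeomorphism on $\lyap^{-1}(c)$, we conclude $\Sigma\cong\sph^4$ as $C^k$ manifolds, establishing the 4D smooth Poincar\'e conjecture.

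\emph{Principal obstacle.} The reverse direction carries the main technical burden. The surgery producing contractible $W$ must manage normal-bundle obstructions at the $H_2$-killing 3-handle step, and the construction of $\vfo$ and $\lyap$ on $W^\circ$ must simultaneously ensure completeness of $\vfo$, that its basin is all of $W^\circ$, and that $\Sigma$ appears as a regular level set. Both pieces are standard in principle---simply-connected surgery in dimension 5, and Lyapunov-function construction on the open contractible 5-manifold $W^\circ$, which is simply connected at infinity---but their compatible joint execution is the technical core of the proof.
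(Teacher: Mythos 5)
Your sufficiency direction (4D smooth Poincar\'e conjecture $\Rightarrow$ $C^k$ statement for $n=5$) is correct and is the same argument the paper gives: upgrade the homeomorphism $P\colon L\to S^4$ appearing in the proof of Theorem~\ref{th:global-lin} to a diffeomorphism.

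The necessity direction is where there is a genuine gap. You want to build a compact contractible smooth $5$-manifold $W$ with $\partial W=\Sigma$ and then, on $W^\circ$, a complete vector field with a globally asymptotically stable equilibrium $x_*$ and a proper strict Lyapunov function $V$ having a regular level set $V^{-1}(c)$ diffeomorphic to $\Sigma$. The problem is not the existence of $W$ (which, while requiring care with normal bundles, is standard). The problem is the Lyapunov function: a proper strict Lyapunov function has $dV\neq 0$ away from $x_*$, so $V$ has exactly one critical point on $W^\circ$, and the normalized gradient flow gives a diffeomorphism $W^\circ\setminus\{x_*\}\cong V^{-1}(c)\times\R$. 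If $V^{-1}(c)\cong\Sigma$ as you require, you have thereby produced a product structure $\Sigma\times\R$ on a deleted contractible $5$-manifold --- which is essentially the content you set out to prove, not a given. Wilson's theorem does hand you a proper strict Lyapunov function for any such equilibrium, but it gives no control on the diffeomorphism type of the level sets; there is no reason they should be diffeomorphic to the $\Sigma$ you built $W$ from rather than to the standard $S^4$. The parenthetical ``take, e.g., a slight inward collar push of $\partial W$'' does not close this: an inward collar pushoff of $\partial W$ is a hypersurface in $W^\circ$, but realizing it as a regular level of a Lyapunov function whose sublevel sets exhaust $W^\circ$ with no further critical points is precisely the hard step. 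Calling both pieces ``standard in principle'' concedes the difficulty without resolving it.

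The paper bypasses this by a different and essential input: Hirsch's 1965 theorem (the cited \cite[Thm~2]{hirsch1965homotopy}), which asserts that for any smooth homotopy $4$-sphere $L$ there is a $C^\infty$ function $V\colon S^5\to[0,1]$ with exactly two critical points (a max $N$, a min $S$) whose intermediate level sets $V^{-1}(c)$ are diffeomorphic to $L$. Taking $f=-\nabla V$ on $S^5$ gives an asymptotically stable equilibrium $S$ with basin $S^5\setminus\{N\}$, and the Lyapunov function with prescribed level-set type comes for free. After that, your steps 5--7 (image of the level set under $h$ is a smooth radial cross-section of $\dot y=-y$, hence a graph over $S^4$, hence diffeomorphic to $S^4$; then pass from $C^k$ to $C^\infty$ equivalence via \cite[Thm~2.2.7]{hirsch1994differential}) agree with the paper. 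So the overall architecture of your necessity argument is right, but you are missing the Hirsch theorem that actually furnishes the dynamical setup; the surgery-plus-generic-Lyapunov route you sketch does not substitute for it.
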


\section{Proofs}\label{sec:proofs}

We first assume Theorem~\ref{th:global-lin} to give a short proof of Theorem~\ref{th:local-lin}.

\begin{proof}[Proof of Theorem~\ref{th:local-lin}]
	Fix any Hurwitz matrix $A\in \R^{n\times n}$.
	Let $\psi\colon M \to [0,\infty)$ be a $C^\infty$ function equal to $1$ on a neighborhood $U_0$ of $x_*$ and equal to zero outside of a compact subset of $M$.
	Then $\psi f$ is a complete uniquely integrable continuous vector field generating a flow $\Phi$,  $x_*$ is asymptotically stable for $\Phi$, and $\Phi\in C^k$ if $f\in C^k$.
	According to Wilson, there is a proper strict $C^\infty$ Lyapunov function $V\colon B\to [0,\infty)$ for $x_*$ and $\Phi$ \cite[Thm~3.2]{wilson1969smooth} (see also \cite[Sec.~6]{fathi2019smoothing}).
	Theorem~\ref{th:global-lin} furnishes a homeomorphism $h_0\colon B\to \R^n$ that satisfies \eqref{eq:th:global-lin} and, if $n\neq 5$, restricts to a $C^k$-diffeomorphism $B\setminus \{x_*\}\to \R^n\setminus \{0\}$.
	Let $c>0$ be sufficiently small that $U\coloneqq V^{-1}([0,c))$ is contained in $U_0\cap B$.
	Then $h\coloneqq h_0|_U\colon U\to \R^n$ is the desired embedding.
\end{proof}

We now prove Theorem~\ref{th:global-lin} (without assuming Theorem~\ref{th:local-lin}).

\begin{proof}[Proof of Theorem~\ref{th:global-lin}]
	It suffices to find $h\colon B\to \R^n$ satisfying \eqref{eq:th:global-lin} for $A=-I$.
	Indeed, repeating the proof with $B$ and $f$ replaced by $\R^n$ and $\tilde{f}(x)=Ax$ then yields a corresponding transformation $\tilde{h}\colon \R^n\to \R^n$, so the composition $\tilde{h}^{-1}\circ h\colon B\to \R^n$  satisfies \eqref{eq:th:global-lin} for general $A$.
	
	Let  $V\colon B\to [0,\infty)$ be a proper strict $C^\infty$ Lyapunov function for $x_*$ and $\Phi$ (\cite[Thm~3.2]{wilson1969smooth}, \cite[Sec.~6]{fathi2019smoothing}).
	Fix any $c>0$ and note that $V^{-1}([0,c])$ is a contractible compact $C^\infty$ embedded submanifold with codimension-$1$ boundary $L\coloneqq V^{-1}(c)$ homotopy equivalent to the $(n-1)$-sphere $\Sph^{n-1}\coloneqq \{y\in \R^n\colon \|y\|=1\}$ \cite[pp.~326--327]{wilson1967structure}.
	Thus, if $n\neq 5$, there is a $C^\infty$-diffeomorphism $P\colon L\to \Sph^{n-1}$ according to classical facts for $n=2$ \cite[Appendix]{milnor1997topology} and $n=3$  \cite[Thm~9.3.11]{hirsch1994differential}, Perelman for $n=4$ \cite{perelman2002entropy,perelman2003ricci,perelman2003finite} (see \cite[Cor.~0.2]{morgan2007ricci}), and Smale for $n\geq 6$ \cite[Thm~5.1]{smale1962structure}.
	If $n=5$ there is still a homeomorphism $P\colon L\to \Sph^{n-1}$ according to Freedman \cite[Thm~1.6]{freedman1982topology}.
	
	Since for each $x\in B\setminus \{x_*\}$ the trajectory $t\mapsto \Phi^t(x)$  converges to $x_*$ and crosses $L$ exactly once and transversely, the map $\R\times L\to B \setminus \{x_*\}$ defined by $(t,x)\mapsto \Phi^t(x)$ is a homeomorphism and $C^k$-diffeomorphism if $\Phi\in C^k$, with inverse $g=(\tau,\rho)\colon B\setminus \{x_*\}\to \R\times L$ satisfying $\tau(x)\to -\infty$ as $x\to x_*$ (cf. \cite[p.~327]{wilson1967structure}).
	Thus $h\colon B\to \R^n$ defined by $h(x_*)=0$ and
	\begin{equation*}
		h(x)=e^{\tau(x)}P( \rho(x))
	\end{equation*}
	is a homeomorphism that, if $n\neq 5$ and $\Phi\in C^k$, restricts to a $C^k$-diffeomorphism $B\setminus \{x_*\}\to \R^n\setminus \{0\}$. 
	Finally, $h$ satisfies \eqref{eq:th:global-lin} with $A=-I_{n\times n}$ since $\rho\circ \Phi^t|_{B\setminus \{x_*\}}=\rho$ and $\tau \circ \Phi^t|_{B\setminus \{x_*\}}=\tau - t$.
\end{proof}

Finally, we prove Proposition~\ref{prop:poincare}.

\begin{proof}[Proof of Proposition~\ref{prop:poincare}]
	Assume that the $4$-dimensional smooth Poincar\'{e} conjecture is true.
	Then $P\colon L\to \Sph^{4}$ in the proof of Theorem~\ref{th:global-lin} can be taken to be a diffeomorphism when $n=5$.
	With this fact, repeating the proofs of Theorem~\ref{th:global-lin} and Theorem~\ref{th:local-lin} verbatim show that their $C^k$ statements are true for $n=5$.
	
	Conversely, assume that the $C^k$ statement of Theorem~\ref{th:local-lin} (or Theorem~\ref{th:global-lin}) is true for $n=5$ and $A=-I_{n\times n}$.
	Let $L$ be any $4$-dimensional $C^\infty$ homotopy sphere.
	According to Hirsch \cite[Thm~2]{hirsch1965homotopy}, there is a $C^\infty$ function $V\colon \Sph^5\to [0,1]$ having only two critical points, a maximum $N=V^{-1}(1)$ and minimum $S=V^{-1}(0)$, such that there is a diffeomorphism $P_c\colon L\to V^{-1}(c)$ for any $c\in (0,1)$.
	
	Equip $\Sph^5$ with a Riemannian metric and let $\Phi$ be the flow of the complete $C^\infty$ vector field $f=-\nabla V$ on $\Sph^5$.
	Observe that $S\in \Sph^5$ is asymptotically stable with basin of attraction $B\coloneqq \Sph^5\setminus \{N\}$ and that $f$ points inward at the boundary $V^{-1}(c)$ of $V^{-1}([0,c])$ for any $c\in (0,1)$.
	Thus, Theorem~\ref{th:local-lin} (or Theorem~\ref{th:global-lin})  furnishes an open neighborhood $U\subset \Sph^5$ of $S$ and a $C^k$ embedding $h\colon U\to \R^5$ such that, for any $c>0$ small enough that $V^{-1}(c)\subset U$, $h$ diffeomorphically maps $V^{-1}(c)$ onto a $C^k$ embedded submanifold $L_c\subset \R^5$ intersecting every line through the origin in a single point and transversely.
	Fixing such a small $c>0$ and letting $\rho\colon \R^5\setminus \{0\}\to \Sph^{4}$ be the straight line retraction $\rho(y)=\frac{y}{\|y\|}$, it follows that
	\begin{equation*}
		L\xrightarrow{P_c} V^{-1}(c) \xrightarrow{h}L_c  \xrightarrow{\rho} \Sph^{4}
	\end{equation*}
	is a well-defined $C^k$-diffeomorphism. 
	Since $L$ is a $C^\infty$ manifold $C^k$-diffeomorphic to $\Sph^4$, $L$ is $C^\infty$-diffeomorphic to $\Sph^4$ \cite[Thm~2.2.7]{hirsch1994differential}. 
\end{proof}

\section*{Additional Comments}

The paper \cite{grune1999asymptotic} dealt, more generally, with systems $\dot x = f(x,u)$, where $u$ denotes an input or disturbance. That paper showed that the input to state stability property \cite{mct} is equivalent, under similar global changes of variables, to finiteness of the $L^2$ norm (``$H_\infty$ gain'') of the input/state operator $(x(0),u(\cdot))\mapsto x(\cdot)$. As remarked in \cite{grune1999asymptotic}, however, the generalization of linearization constructions to systems with disturbances is not immediate. We thus leave open the study of such extensions.

We also note the relationship between the work reported here and areas of current machine learning research. In \cite[Chapter 6]{bramburger} one finds a discussion of ``autoencoder'' deep neural networks for the  numerical approximation of Hartman-Grobman-like conjugacies for linearizations and Koopman eigenvalues. Our results, especially in combination with new theoretical results about the existence of such autoencoders \cite{kvalheim_sontag_2023autoencoders}, contributes to the theoretical foundation for such studies.

Finally, the proof of Theorem~\ref{th:global-lin} is closely related to the proof of \cite[Prop.~1]{grune1999asymptotic}, which can be viewed as a (global) extension of the Morse lemma \cite{morse1932calculus} (see \cite[Lem.~2.2]{milnor1963morse}) to local minima of non-Morse functions.
Incorporating Perelman's result as in the proof of Theorem~\ref{th:global-lin}, the proof of \cite[Prop.~1]{grune1999asymptotic} can otherwise be repeated verbatim to prove Proposition~\ref{prop:gen-morse} below.
It extends the two-part statement \cite[Prop.~1]{grune1999asymptotic} by removing the hypothesis ``$n\neq 4$'' from the first part and relaxing the hypothesis ``$n\neq 4,5$'' from the second part to ``$n\neq 5$'', which reflects Perelman's result.
(It also contains the superficial extension of replacing the domain $\R^n$ of $V$ with a $C^\infty$ manifold $M$, but the hypotheses imply that $M$ is diffeomorphic to $\R^n$.)
Proposition~\ref{prop:gen-morse} is a global statement, but it readily implies a local statement in a manner similar to the implication of Theorem~\ref{th:local-lin} by Theorem~\ref{th:global-lin}.

For the following statement, a class $\mathcal{K}_\infty$ function is a strictly increasing and continuous function $\gamma\colon [0,\infty)\to [0,\infty)$ satisfying $\lim_{s\to\infty}\gamma(s)=\infty$.

\begin{Prop}\label{prop:gen-morse}
	Let $x_*$ be the unique critical point of a proper $C^1$ function $V\colon M\to \R$ on a connected $n$-dimensional $C^\infty$ manifold $M$. 
	Assume furthermore that $V$ is $C^\infty$ on $M\setminus \{x_*\}$.
	Then for each class $\mathcal{K}_\infty$ function $\gamma$ that is $C^\infty$ on $(0,\infty)$, there exists a homeomorphism $T\colon M\to \R^n$ with $T(x_*)=0$ such that
	\begin{equation*}
		V\circ T^{-1}(y)=\gamma(\|y\|).
	\end{equation*}
	In particular this holds for $\gamma(\|y\|)=\|y\|^2/2$.
	
	If $n\neq 5$ then $T$ can be chosen to restrict to a $C^\infty$-diffeomorphism  $M\setminus \{x_*\} \to \R^n\setminus \{0\}$.
	Furthermore, in this case there exists a class $\mathcal{K}_\infty$ function $\gamma$ which is $C^\infty$ on $(0,\infty)$ and satisfies $\gamma(s)/\gamma'(s)\geq s$ such that $T$ is $C^1$ with $DT(0)=0$.
\end{Prop}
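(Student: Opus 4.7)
The plan is to mimic the construction in the proof of Theorem~\ref{th:global-lin}, using $V$ itself as the radial coordinate in the target space. First observe that properness of $V$ and uniqueness of its critical point on the connected manifold $M$ force $V$ to attain its minimum at $x_*$; after translating we may assume $V(x_*)=0$. Equip $M$ with any Riemannian metric and consider the negative gradient flow on $M\setminus \{x_*\}$: since every positive value is a regular value and $V$ is proper, each trajectory either approaches $x_*$ forward in time or escapes to infinity backward in time, and it crosses every level set $L\coloneqq V^{-1}(c)$ (with $c>0$) in exactly one point. This yields a smooth retraction $\rho\colon M\setminus \{x_*\}\to L$.

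Fix such $c>0$. The sublevel set $V^{-1}([0,c])$ is compact and contractible, so $L$ is a smooth homotopy $(n-1)$-sphere. As in the proof of Theorem~\ref{th:global-lin}, the classical results plus Perelman's theorem yield a $C^\infty$ diffeomorphism $P\colon L\to S^{n-1}$ when $n\neq 5$, and Freedman's theorem supplies a homeomorphism when $n=5$. I would then define
\begin{equation*}
T(x_*)=0, \qquad T(x)=\gamma^{-1}(V(x))\,P(\rho(x)) \quad \text{for } x\neq x_*,
\end{equation*}
so that $\|T(x)\|=\gamma^{-1}(V(x))$ and hence $V\circ T^{-1}(y)=\gamma(\|y\|)$ by design. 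Away from $x_*$, $T$ is a composition of smooth maps and thus a homeomorphism, indeed a $C^\infty$ diffeomorphism if $n\neq 5$, onto $\R^n\setminus\{0\}$. Continuity of $T$ at $x_*$ and of $T^{-1}$ at $0$ follows from the standard compactness fact that $V^{-1}([0,\epsilon])$ shrinks to $\{x_*\}$ as $\epsilon\to 0$, since $V^{-1}(0)=\{x_*\}$ and $V^{-1}([0,1])$ is compact. This proves the main statement together with the $C^\infty$ diffeomorphism clause for $n\neq 5$.

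The main obstacle is the final assertion that $\gamma$ can be chosen with $\gamma(s)/\gamma'(s)\geq s$ so that $T$ is globally $C^1$ with $DT(x_*)=0$. Near $x_*$ the chain rule expresses $DT(x)$ as a sum of a radial term proportional to $P(\rho(x))\otimes DV(x)/\gamma'(\gamma^{-1}(V(x)))$ and an angular term of size $\gamma^{-1}(V(x))\cdot \|D(P\circ \rho)(x)\|$. The growth condition $\gamma(s)/\gamma'(s)\geq s$ is precisely the quantitative slow-growth hypothesis that allows both of these contributions to be driven to zero as $x\to x_*$, even when $\|DV(x)\|$ fails to vanish sufficiently quickly and when $\|D(P\circ\rho)(x)\|$ blows up. Constructing a $\gamma$ balancing these two contributions is the same task addressed in the corresponding step of the proof of \cite[Prop.~1]{grune1999asymptotic}, and that construction can be repeated verbatim; the only new input here is Perelman's theorem used above, which shrinks the exceptional dimension set from $\{4,5\}$ in \cite[Prop.~1]{grune1999asymptotic} down to $\{5\}$.
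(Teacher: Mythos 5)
Your proposal is correct and follows essentially the same route the paper indicates: mirror the level-set construction from the proof of Theorem~\ref{th:global-lin}, set $T(x)=\gamma^{-1}(V(x))\,P(\rho(x))$, invoke the classical/Perelman/Freedman trichotomy for $P\colon L\to S^{n-1}$, and defer the $C^1$-at-$x_*$ clause to the verbatim argument from \cite[Prop.~1]{grune1999asymptotic}. The only point worth tightening is the step ``compact and contractible, so $L$ is a smooth homotopy $(n-1)$-sphere'': boundaries of compact contractible manifolds are in general only homology spheres (Mazur manifolds give non--simply-connected examples), so you should instead either cite Wilson \cite[pp.~326--327]{wilson1967structure} as the paper does, or note that the gradient-flow diffeomorphism $M\setminus\{x_*\}\cong \R\times L$ together with $\pi_1(M\setminus\{x_*\})\cong\pi_1(M)=0$ (for $n\geq 3$, removing a point does not change $\pi_1$) forces $L$ to be simply connected, which combined with the homology computation gives the homotopy-sphere conclusion.
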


\section*{Acknowledgments} This material is based upon work supported by the Air Force Office of Scientific Research under award number FA9550-24-1-0299 to Kvalheim and award number FA9550-21-1-0289 to Sontag.

\bibliographystyle{amsalpha}
\bibliography{global_linearization_kvalheim_sontag}

\end{document}